\let\chapter\@undefined\makeatother 
\newtheorem{theorem}{Theorem}[section]
\newtheorem{corollary}{Corollary}
\newtheorem{definition}{Definition}
\newtheorem{lemma}{Lemma}
\newtheorem{prop}{Proposition}
\newcommand\varitem[1]{\item[\textbf{A\arabic{enumi}\rlap{$#1$}.}]%
  \edef\@currentlabel{A\arabic{enumi}{$#1$}}}
\begin{document}

\setlist{noitemsep}  

\title{A 2-Dimensional Functional Central Limit Theorem for Non-stationary Dependent Random Fields\footnotetext{$^{\dagger}$Department of Economics, University of Central Florida.  
Email: michael.tseng@ucf.edu}}

\author{Michael C. Tseng
}


\date{
Department of Economics\\
University of Central Florida\\[2ex]
 \mydate   \today
}
\renewcommand{\thefootnote}{\fnsymbol{footnote}}

\singlespacing

\maketitle

\vspace{0.5cm}

\noindent \rule{6.5in}{1pt}

\vspace{-.2in}
{\flushleft \bf Abstract}

\vspace{0.5cm}

\noindent We obtain an elementary invariance principle for multi-dimensional Brownian sheet where the underlying random fields are not necessarily independent or stationary. 
Possible applications include unit-root tests for spatial as well as panel data models.

\medskip
 \vspace{0.5cm}
\noindent \textit{MSC classification}: 60F17. 

\noindent \textit{JEL classification}: C10.

\medskip
\noindent \textit{Keywords}: Brownian sheet; $m$-dependent fields. 

\thispagestyle{empty}

\clearpage

\onehalfspacing
\setcounter{footnote}{0}
\renewcommand{\thefootnote}{\arabic{footnote}}
\setcounter{page}{1}


\section{Introduction}

We obtain an elementary invariance principle for dependent random fields that does not require stationarity. 
Invariance principles under stationarity and mixing-type of conditions have also been 
considered by \cite{berkes1981strong}, \cite{wang2013new}, and \cite{volny2014invariance}.
Our current setting requires elements of the underlying field $\{ x_{i,j} \}_{i \geq 1, j \geq 1}$ to
have the property that $x_{i,j}$ and $x_{i',j'}$ are uncorrelated whenever $\max\{|i-i'| , |j - j'|\}$ is greater than some finite integer, say $m$.
Such random fields will be referred to as {\sl $m$-dependent}. In dimension one, $m$-dependence generalizes finite-order moving-average time series whose the underlying innovation is 
a martingale difference sequence.
For ease of exposition, we present our result for dimension two. Extending to higher dimensions is straightforward.

The rest of this paper is organized as follows.
Section~\ref{sec: preliminary facts} states preliminary facts regarding tightness and asymptotic Gaussianity for certain random elements on the Skorohod space $D([0,1]^2)$.
Section~\ref{sec: m-dependent fields} defines $m$-dependent random fields and establishes a maximal inequality.
Section~\ref{sec: invariance principle} proves the main result, an invariance principle for $m$-dependent random fields.

\section{Preliminary Facts}
\label{sec: preliminary facts}
\subsection{$D([0,1]^2)$}

We recall relevant properties of the Skorohod metric space $D =  D([0,1]^2)$ (see \cite{bickel1971convergence}).
A {\sl step function} on $[0,1]^2$ is an indicator function of the form $1_{E_1 \times E_2}$ where $E_i$ is either a
left-closed, right-open subset of $[0,1]$ or $\{1\}$.
As a set, $D$ is the uniform closure of the vector space generated by step functions.
Let $\pi$ denote a rectangular
partition of the form $\{ \pi_{ij} = [ \pi_{1 i},  \pi_{1 i+1} ) \times [ \pi_{2 j},  \pi_{2 j+1} ), 1 \leq i \leq n_1, 1 \leq j \leq n_2 \}$
of $[0,1]^2$ where the south and west (resp. north and east) edges of each rectangle are closed (resp. open). 
Let $G_{\delta}$ denote the family of all rectangular
partition $\pi$ such that where $\max \{ | \pi_{1 i} -  \pi_{1 i+1} |,  | \pi_{2 j} -  \pi_{2 j+1} | \} > \delta$ for all $i, j$.
For $x \in D$ and $0 < \delta < 1$, define
$$
w'(x,  \delta ) = \inf_{\pi \in G_{\delta}} \max_{ \bf{s}, \bf{t} \in \pi_{ij} } | x(\bf{s}) - x(\bf{t}) |.  
$$
As in the case of $D[0,1]$, $x$ lies in $D$ if and only if $\lim_{\delta \rightarrow 0} w'(x,  \delta) \rightarrow 0$.
A related quantity, the modulus of continuity,  is defined by
$$
w(x,  \delta ) = \sup_{ \| \bf{s} - \bf{t} \|_{\infty} < \delta } | x(\bf{s}) - x(\bf{t}) |. 
$$ 
It is clear that $x$ lies in $C = C([0,1]^2)$ if and only if $\lim_{\delta \rightarrow 0} w(x,  \delta) \rightarrow 0$.
In general, $w'(x,  \delta ) \leq w(x, 2\delta )$.
If $x \in C$, then $w(x,  \delta ) \leq 2 w'(x, \delta )$.

The Skorohod topology on $D$ is defined as follows.
Let $\Lambda$ denote the class of maps $\lambda : [0,1]^2 \rightarrow [0,1]^2$
such that $\lambda(t_1, t_2) = (\lambda_1(t_1), \lambda_2(t_2))$ 
where $\lambda_1, \lambda_2: [0,1] \rightarrow [0,1]$ are strictly increasing, continuous, 
$\lambda_1(0) = \lambda_2(0) = 0$, $\lambda_1(1) = \lambda_2(1) = 1$.
For $x, y \in D$, the Skorohod metric $d(x,y)$ is defined by
$$
d(x,y) = \inf \{ \epsilon: \, \sup_{ \bf{t} } \| \lambda( \bf{t} ) - \bf{t} \|_{\infty} \leq \epsilon, \sup_t | x (t) - y( \lambda(t) ) | \leq \epsilon \}.
$$
The Skorohod topology coincides with the uniform topology on $C \subset D$.

$D$ is not complete under the Skorohod metric $d$.
The Skorohod topology is also induced by another metric, under which $D$ is complete. 
If one restricts $\lambda$ to maps satisfying
$$
\max_{i = 1, 2} \|\lambda_i\| < \infty, \;   \|\lambda_i\| \equiv \sup_{t_i > s_i} \log \frac{ \lambda_i t_i - \lambda_i s_i   }{t_i - s_i},
$$
the Billingsley's metric $d_0$ is defined by
$$
d_0(x,y) = \inf \{ \epsilon: \,\| \lambda \| \leq \epsilon, \sup_t | x (\bf{t}) - y( \lambda(\bf{t}) ) | \leq \epsilon \}.
$$
From the Taylor expansion estimate
$$
\log 1 - 2 \epsilon \leq -\epsilon \leq \log \frac{\lambda_i t_i }{ t_i} | \leq \epsilon \leq \log 1 + 2 \epsilon,
$$
it follows immediately 
the definition that $d(x,y) \leq 2 d_0(x,y)$.
On the other hand, if $d(x,y) < \delta^2$ for $0 < \delta < \frac14$, then\footnote{This can be proved by applying the same argument as in Lemma 2 on p113 of \cite{billingsley1968convergence} to
each coordinate.}
$$
d_0(x,y) \leq 4 \delta + w'(x, \delta).
$$
Therefore the metric $d_0$ is equivalent to $d$.

The argument for completeness of $D$ under $d_0$ is 
the same as that in Theorem 14.2 on p115 of \cite{billingsley1968convergence}, applied to each
coordinate. The difference between $d_0$ and $d$ is the extra rigidity requirement on $\lambda$, which
implies that certain sequences which are Cauchy under $d$ are not Cauchy under $d_0$.
Next we have a characterization of compactness in $D$ of Arzel\`{a}-Ascoli type.

\begin{prop} 
\label{prop: compactness characterization}
$A \subset D$ is precompact if and only if the following conditions hold:

(i) $\sup_{x \in A} \sup_t  |x(\bf{t})| < \infty$.

(ii) $\lim_{\delta \rightarrow 0} \sup_{x \in A} w'(x, \delta) = 0$.
\end{prop}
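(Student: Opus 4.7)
The plan is to adapt the classical one-dimensional Arzel\`a--Ascoli characterization (Theorem~14.3 of \cite{billingsley1968convergence}), exploiting the product form $\lambda=(\lambda_1,\lambda_2)$ of the admissible time changes in $\Lambda$ to reduce everything to coordinate-wise estimates. Completeness of $(D,d_0)$ and the equivalence $d\sim d_0$ allow us to work interchangeably with the two metrics, and in particular to identify precompactness with total boundedness.

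For necessity, suppose $A$ is precompact, hence totally bounded. For any $\epsilon>0$, choose a finite $\epsilon$-net $\{x_1,\ldots,x_N\}\subset A$ with respect to $d$. Since each $\lambda\in\Lambda$ is a bijection of $[0,1]^2$, the condition $d(x,x_k)<\epsilon$ immediately yields $\sup_{\mathbf{t}}|x(\mathbf{t})|\leq \sup_{\mathbf{t}}|x_k(\mathbf{t})|+\epsilon$, which gives~(i). For~(ii), I would pull a near-optimal partition $\pi\in G_\delta$ witnessing $w'(x_k,\delta)$ back through $\lambda$: because $\lambda$ has the product form, $\lambda^{-1}(\pi)$ is again a rectangular partition, with mesh comparable to $\delta$, along which $x$ oscillates by at most $w'(x_k,\delta)+2\epsilon$. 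Choosing $\delta$ small enough that each $x_k$ has small $w'(x_k,\delta)$ and then sending $\epsilon\to 0$ yields~(ii).

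For sufficiency, assume~(i) and~(ii) and set $K:=\sup_{x\in A}\sup_{\mathbf{t}}|x(\mathbf{t})|$. Since $(D,d_0)$ is complete and $d_0$ is equivalent to $d$, it suffices to produce a finite $\epsilon$-net in $A$ with respect to $d_0$ for every $\epsilon>0$. Given $\epsilon$, choose $\delta>0$ with $\sup_{x\in A}w'(x,\delta)<\epsilon$. For each $x\in A$, pick a near-optimal $\pi(x)\in G_\delta$ and replace $x$ by the step function $\tilde x$ taking the value $x(\mathbf{s}_{ij})$ at a chosen corner $\mathbf{s}_{ij}\in\pi_{ij}$; then $\sup_{\mathbf{t}}|\tilde x(\mathbf{t})-x(\mathbf{t})|<\epsilon$. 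The step functions $\tilde x$ are parameterized by two coordinate partitions, each with at most $\lceil 1/\delta\rceil+1$ cut points in $[0,1]$, together with at most $\lceil 1/\delta\rceil^2$ constant values in $[-K,K]$. Rounding the cut points and the values to an $\epsilon$-grid yields a finite collection $\mathcal N$ of step functions, and the same one-dimensional argument invoked in the footnote above (Lemma~2, p.~113 of \cite{billingsley1968convergence}) applied to each axis shows that $d_0(\tilde x,\mathcal N)=O(\epsilon)$ uniformly in $x\in A$. Replacing each realized element of $\mathcal N$ by a nearby member of $A$ produces the required finite net.

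The main obstacle is the two-dimensional bookkeeping: one has to verify that the rectangularity of partitions in $G_\delta$ is preserved when pulled back through the product map $\lambda$ (so that $w'$ is well-behaved under $d$), and that the discretization of cut points on each axis can be coordinated with the product decomposition $\lambda=(\lambda_1,\lambda_2)$ so that the one-dimensional $d_0$ estimate applies separately in each variable. Once this reduction is in place, the remaining estimates carry over essentially verbatim from the one-dimensional case.
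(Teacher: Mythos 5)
Your argument is essentially the paper's own: the paper proves this proposition simply by invoking the argument of Theorem~14.3 of \cite{billingsley1968convergence} applied coordinatewise, and your proposal is a correct fleshing-out of exactly that route (finite nets plus the product form $\lambda=(\lambda_1,\lambda_2)$ preserving rectangular partitions for necessity, step-function discretization plus completeness of $(D,d_0)$ for sufficiency). The remaining quantifier and mesh bookkeeping you flag (e.g.\ pulled-back side lengths exceeding $\delta-2\epsilon$, distinctness of rounded cut points) is routine and does not affect correctness.
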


This can be proved by applying the same argument as Theorem 14.3 of \cite{billingsley1968convergence}.
The Borel $\sigma$-algebra $\mathcal{D}$ on $D$ is
generated by the coordinate maps $\pi_{ \bf{t}_1, \bf{t}_2, \cdots \bf{t}_k}(x) = ( x(\bf{t}_1), \cdots,  x(\bf{t}_k) )$, $\bf{t}_1, \cdots, \bf{t}_k \in[0,1]^2$. 
Proposition~\ref{prop: compactness characterization} immediately leads to the following characterization of tightness on $D$.

\begin{prop} 
\label{prop: tightness characterization}
A sequence of probability measures $\{ P_n\}$ on $( D, \mathcal{D} ) $ is tight if and only if the following conditions hold:

(i) The family $\{ P_n\}$  pushed forward to the real line by $\|\cdot\|_{\infty}$ is tight, i.e. for all $\eta > 0$, there exists
$a > 0$ such that
$$
P_n (x: \| x \|_{\infty} > a) < \eta, \; \forall n \geq 1.
$$

(ii) For all $\epsilon > 0$ and $\eta > 0$, there exists a $\delta \in (0,1)$ and $n_0$ such that
$$
P_n (x: w'(x, \delta) \geq \epsilon) \leq \eta, \; \forall n \geq n_0.
$$

\end{prop}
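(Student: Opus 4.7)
The plan is the standard Prokhorov-style reduction on a Polish space: since Proposition~\ref{prop: compactness characterization} characterizes precompact subsets of $D$ in terms of exactly the two conditions appearing in the statement, tightness---the existence of a single compact set of uniformly large mass---is essentially equivalent to (i) and (ii). The only real work is reconciling the ``for all $n$'' form required for tightness with the ``for $n \geq n_0$'' form of (ii).

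For necessity, suppose $\{P_n\}$ is tight. Given $\eta > 0$, pick a compact $K \subset D$ with $\inf_n P_n(K) \geq 1 - \eta$. Proposition~\ref{prop: compactness characterization}(i) supplies $a$ with $\sup_{x \in K} \|x\|_\infty \leq a$, so $\{x : \|x\|_\infty > a\} \subseteq K^c$, giving (i). Proposition~\ref{prop: compactness characterization}(ii) supplies, for every $\epsilon > 0$, some $\delta \in (0,1)$ with $\sup_{x \in K} w'(x,\delta) < \epsilon$, so $\{x : w'(x,\delta) \geq \epsilon\} \subseteq K^c$, giving (ii) with $n_0 = 1$.

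For sufficiency, fix $\eta > 0$. Use (i) to pick $a$ with $P_n(\|x\|_\infty > a) \leq \eta/2$ for all $n$. For each integer $k \geq 1$, use (ii) with $\epsilon = 1/k$ and tolerance $\eta\, 2^{-k-2}$ to produce $\delta_k$ and $N_k$. Since $(D, d_0)$ is Polish, each individual $P_n$ is tight, so for every $n < N_k$ one can choose $\delta_{k,n} > 0$ with $P_n(w'(x, \delta_{k,n}) \geq 1/k) \leq \eta\, 2^{-k-2}$. Set $\delta_k^* := \min(\delta_k, \delta_{k,1}, \ldots, \delta_{k, N_k - 1})$; monotonicity of $w'(x, \cdot)$ in $\delta$ then upgrades the estimate so that $P_n(w'(x, \delta_k^*) \geq 1/k) \leq \eta\, 2^{-k-2}$ for \emph{every} $n \geq 1$. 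The set
$$
K \;=\; \{x : \|x\|_\infty \leq a\} \,\cap\, \bigcap_{k \geq 1} \{x : w'(x, \delta_k^*) \leq 1/k\}
$$
is precompact by Proposition~\ref{prop: compactness characterization}, and a union bound yields $\sup_n P_n(K^c) \leq \eta/2 + \sum_{k \geq 1} \eta\, 2^{-k-2} \leq \eta$, which is tightness. The only substantive step, and hence the main obstacle, is the uniformity upgrade from ``$n \geq N_k$'' to ``all $n$'' just described; everything else is read off from Proposition~\ref{prop: compactness characterization}.
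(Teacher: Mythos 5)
Your proof is correct and follows essentially the same route as the paper, which simply invokes the Billingsley Theorem 8.2/15.2 argument: necessity read off from Proposition~\ref{prop: compactness characterization}, and sufficiency by first upgrading ``$n \geq n_0$'' to ``all $n$'' via tightness of each individual $P_n$ on the Polish space $(D,d_0)$ and then performing the standard intersection-and-union-bound construction of a (pre)compact set. The only cosmetic point is that at the end you should pass to the closure of $K$ to exhibit an honest compact set, as the paper's cited argument does.
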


See Theorems 8.2 and 15.2 of \cite{billingsley1968convergence}; the same argument as Theorem 8.2 goes through.
Necessity is immediate consequence of Prop.~\ref{prop: compactness characterization}.
Necessity implies that, in condition (ii), $n_0$ can be taken to be equal $1$ without loss of generality, since any finite set
of probability measures is tight.
From this strengthened condition (ii), sufficiency follows.
We are interested in (limit) probability measures whose support lie in $C = C([0,1]^2)$.
The following is the two dimensional analogue of Theorem 15.5 of \cite{billingsley1968convergence}.
It provides sufficient conditions that guarantee tightness as well as any limit measure having support in $C$.

\begin{prop} 
\label{prop: tightness from C conditions}
A sequence of probability measures $\{ P_n\}$ on $( D, \mathcal{D} ) $ is tight if the following conditions hold:

(i)  For all $\eta > 0$, there exists
$a > 0$ such that
$$
P_n (x: x(0,0) > a) < \eta, \; \forall n \geq 1.
$$

(ii) For all $\epsilon > 0$ and $\eta > 0$, there exists a $\delta \in (0,1)$ and $n_0$ such that
$$
P_n (x: w(x, \delta) \geq \epsilon) \leq \eta, \; \forall n \geq n_0.
$$

Moreover, for any $P$ is a weak limit point of $\{P_n\}$, $P(C)=1$.
\end{prop}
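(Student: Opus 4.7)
The plan is to verify both conditions of Proposition~\ref{prop: tightness characterization} and then deduce $P(C)=1$ from hypothesis (ii) via a Skorohod-representation argument. Condition (ii) of Proposition~\ref{prop: tightness characterization} is immediate from the inequality $w'(x,\delta) \leq w(x,2\delta)$ recalled in Section~\ref{sec: preliminary facts}: the event $\{w'(\cdot,\delta) \geq \epsilon\}$ is contained in $\{w(\cdot,2\delta) \geq \epsilon\}$, so hypothesis (ii) supplies the required bound after replacing $\delta$ by $\delta/2$.

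For condition (i) of Proposition~\ref{prop: tightness characterization}, I would use the deterministic chaining bound
\[
\|x\|_\infty \leq |x(0,0)| + \lceil 2/\delta\rceil \, w(x,\delta), \qquad \delta \in (0,1),
\]
obtained by connecting $(0,0)$ to any $\mathbf{t} \in [0,1]^2$ through at most $\lceil 2/\delta\rceil$ intermediate points with consecutive sup-norm spacing below $\delta$. Given $\eta > 0$, I would choose $\delta$ and $n_0$ from hypothesis (ii) so that $P_n\{w(\cdot,\delta) \geq 1\} \leq \eta/2$ for $n \geq n_0$, pick $a_1$ from hypothesis (i) so that $P_n\{|x(0,0)| > a_1\} \leq \eta/2$ uniformly, and set $a = a_1 + \lceil 2/\delta\rceil$; the finitely many $n < n_0$ form a tight finite family on the Polish space $(D,d_0)$ and are absorbed.

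For the claim $P(C) = 1$, I would write $D \setminus C = \bigcup_{k \geq 1} A_{1/k}$ with $A_\epsilon = \bigcap_{\delta > 0}\{x : w(x,\delta) \geq \epsilon\}$ and show $P(A_\epsilon) = 0$ for every $\epsilon > 0$. Invoking Skorohod's representation, I would realize the weakly convergent subsequence $P_{n_k} \Rightarrow P$ by random elements $\xi_k \to \xi$ almost surely in $(D,d)$. Pulling each pair $\mathbf{s},\mathbf{t}$ with $\|\mathbf{s}-\mathbf{t}\|_\infty < \delta$ back through the time changes $\lambda_k$ realizing $d(\xi_k,\xi)$ yields, for every $\delta' > \delta$, the almost-sure bound $w(\xi,\delta) \leq w(\xi_k,\delta') + o(1)$, hence $w(\xi,\delta) \leq \liminf_k w(\xi_k,\delta')$ a.s. Fatou's lemma for events, combined with hypothesis (ii), then gives $P\{w(\cdot,\delta) \geq \epsilon\} \leq \eta_0$ for $\delta$ small and any $\eta_0 > 0$, and $\eta_0 \downarrow 0$ yields $P(A_\epsilon) = 0$.

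The main technical obstacle is the almost-sure inequality $w(\xi,\delta) \leq w(\xi_k,\delta') + o(1)$, the 2D analogue of a step in the proof of Theorem 15.5 of~\cite{billingsley1968convergence}. The product structure of the admissible time changes $\lambda = (\lambda_1,\lambda_2)$ in the 2D Skorohod topology means two independent uniform perturbations of the coordinates must be tracked simultaneously; the essentials of the 1D argument carry over coordinatewise once the uniform estimates on $\lambda_1$ and $\lambda_2$ are combined.
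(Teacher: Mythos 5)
Your proposal is correct, and the first half (tightness) follows essentially the paper's route: condition (ii) of Proposition~\ref{prop: tightness characterization} via $w'(x,\delta)\leq w(x,2\delta)$, and condition (i) via a chaining bound from $(0,0)$ — your bound $\|x\|_\infty \leq |x(0,0)| + \lceil 2/\delta\rceil\, w(x,\delta)$ (diagonal chain, linear in $1/\delta$) replaces the paper's grid bound $\|x\|_\infty \leq a + \epsilon/\delta^2$, and your explicit absorption of the finitely many $n<n_0$ via tightness of finite families on the Polish space $(D,d_0)$ is actually more careful than the paper's proof, which tacitly takes $n_0=1$. Where you genuinely diverge is the claim $P(C)=1$: the paper stays at the level of the portmanteau theorem, using the topological fact that $\{y: w(y,\delta/2)\geq 2\epsilon\}$ lies in the interior of $\{x: w(x,\delta)\geq\epsilon\}$ to transfer the bound from $P_{n'}$ to the limit $P$, then takes $\epsilon_k\to 0$, $\delta_k\to 0$ and a $\limsup$ of the good events; you instead invoke the Skorohod representation theorem to get almost surely convergent versions and prove the pathwise estimate $w(\xi,\delta)\leq w(\xi_k,\delta') + 2\epsilon_k$ (for $2\epsilon_k<\delta'-\delta$), then conclude by Fatou for events. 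Both are sound, and the key metric estimate is essentially the same in either route (a small Skorohod perturbation changes $w(\cdot,\delta)$ only by enlarging $\delta$ slightly and adding a uniform error); what your version buys is avoiding the interiority/portmanteau bookkeeping, at the cost of invoking the heavier Skorohod representation theorem, which requires separability of $(D,d_0)$ — true, but not stated in the paper, whereas the paper's argument needs only weak convergence along the subsequence. Two cosmetic points: hypothesis (i) as stated controls only $x(0,0)>a$, so (as in the paper's own proof) you are implicitly reading it as a bound on $|x(0,0)|$; and in the Fatou step one should pass through a strict level $\epsilon'<\epsilon$ (or note $P\{w(\xi,\delta)\geq\epsilon\}\leq\liminf_k P\{w(\xi_k,\delta')\geq\epsilon/2\}$, say) to convert the a.s. inequality between the moduli into the inequality between probabilities — routine, but worth a line.
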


\begin{proof}
Since $w'(x, \delta) \leq w(x, 2 \delta)$, condition (ii) of Proposition~\ref{prop: tightness characterization} follows from condition (ii).
Let $\epsilon > 0$ and $\eta > 0$, choose a $\delta \in (0,1)$ such that
$$
P_n (x: w(x, \delta) \leq \epsilon) \geq 1- \frac{\eta}{2}, \; \forall n \geq 1,
$$
and $a > 0$ such that
$$
P_n (x: x(0,0) \leq a) \geq  1- \frac{\eta}{2}, \; \forall n \geq 1.
$$
We have 
$$
P_n (\, (x: w(x, \delta) \leq \epsilon) \cap (x: x(0,0) \leq a) \,) \geq 1 - \eta,
$$
and (partitioning $[0,1]^2$ into $\delta \times \delta$ regular grids),
$$
(x: w(x, \delta) \leq \epsilon) \cap (x: x(0,0) \leq a) \subset (x:  \| x \|_{\infty} \leq a + \epsilon \cdot \frac{1}{\delta^2}).
$$
So condition (i) of Proposition~\ref{prop: tightness characterization} holds.
This proves tightness.

If $w(y, \frac{\delta}{2}) \geq 2 \epsilon$, then $y$ is interior to $w(x, \delta) \geq \epsilon$.
By characterization of weak convergence, a subsequence $P_{n'} \Rightarrow P$ therefore implies
$$
P(y: w(y, \frac{\delta}{2}) \geq 2 \epsilon) \leq \lim \inf_{n'} P_{n'}(x: w(x, \delta) \geq  \epsilon).
$$
Let $\epsilon_k \rightarrow 0$, condition (ii) implies that there exists a sequence $\delta_k \rightarrow 0$ such that
$$
P(y: w(y, \delta_k) \leq  \epsilon_k) \rightarrow 1.
$$
Let $A = \lim \sup_k  \{ y: w(y, \delta_k) \leq  \epsilon_k \}$, then $A \subset C$ and $P(A) = 1$.
This proves the proposition.
\end{proof}

Following Theorem 8.3 of \cite{billingsley1968convergence}, we obtain a sufficient condition
for condition (ii) of Proposition~\ref{prop: tightness from C conditions}
that can be applied to the random elements we will consider.

\begin{prop} 
\label{prop: operational suff cond}
If for all $\epsilon > 0$ and $\eta > 0$, there exists a $\delta \in (0,1)$ and $n_0$ such that
for all $(t_1, t_2) \in [0,1]^2$,
$$
\frac{1}{\delta^2} P_n (x: \sup_{ t_1 \leq s_1 \leq t_1 + \delta,\, t_2 \leq s_2 \leq t_2 + \delta }   | x(s_1, s_2) - x(t_1, t_2) |  \geq \epsilon)  \leq \eta, \; \forall n \geq n_0,
$$
then Condition (ii) of Proposition~\ref{prop: tightness from C conditions} holds.
\end{prop}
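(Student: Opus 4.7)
The plan is to follow Theorem 8.3 of \cite{billingsley1968convergence} and adapt its one-dimensional grid argument to the two-dimensional setting. The idea is to tile $[0,1]^2$ with $\delta$-boxes anchored at a finer $\delta/2$-lattice, control the oscillation inside each box via the hypothesis, and then stitch the local bounds together through the triangle inequality.

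Given $\epsilon, \eta > 0$, I would first apply the hypothesis with $\epsilon/2$ in place of $\epsilon$ and $\eta/K$ in place of $\eta$, where $K$ is a universal constant (e.g.\ $K=9$) large enough to absorb the number of base points appearing in the union bound to follow. This produces $\delta \in (0,1)$ and $n_0$ such that, for every $(t_1,t_2) \in [0,1]^2$ and every $n \ge n_0$,
$$
P_n\!\left( \sup_{\substack{ t_1 \le s_1 \le t_1 + \delta \\ t_2 \le s_2 \le t_2 + \delta }} | x(s_1, s_2) - x(t_1, t_2) | \ge \epsilon/2 \right) \le \frac{\eta}{K}\, \delta^2.
$$
Next, consider the finite grid of base points $t^{ij} = (i\delta/2, j\delta/2)$ with $0 \le i, j \le \lfloor 2/\delta \rfloor$; there are at most $(1 + 2/\delta)^2 \le K/\delta^2$ of them. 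Letting $A_{ij}$ denote the event in the displayed inequality with $(t_1,t_2)$ replaced by $t^{ij}$, a union bound yields $P_n\!\left( \bigcup_{i,j} A_{ij} \right) \le \eta$ for all $n \ge n_0$.

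The remaining step is purely deterministic. For any $s, t \in [0,1]^2$ with $\|s - t\|_\infty < \delta/2$, take $i = \lfloor 2\min(s_1,t_1)/\delta \rfloor$ and $j = \lfloor 2\min(s_2,t_2)/\delta \rfloor$; a direct check shows that both $s$ and $t$ then lie inside the $\delta$-box $[i\delta/2,\, i\delta/2 + \delta] \times [j\delta/2,\, j\delta/2 + \delta]$ anchored at $t^{ij}$. On the complement of $\bigcup_{i,j} A_{ij}$, routing through $x(t^{ij})$ via the triangle inequality then gives $|x(s) - x(t)| < \epsilon$, so $w(x, \delta/2) \le \epsilon$ on that event. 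Relabeling $\delta/2$ as the $\delta$ in the conclusion yields condition (ii) of Proposition~\ref{prop: tightness from C conditions}. I do not anticipate any substantive obstacle: the argument is routine 2D bookkeeping, and the factor $\delta^2$ in the hypothesis is precisely what balances the $O(1/\delta^2)$ base points needed to cover $[0,1]^2$.
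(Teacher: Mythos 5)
Your proposal is correct and is exactly the argument the paper intends: the paper gives no details beyond ``extend Theorem 8.3 of Billingsley (1968) to two dimensions,'' and your $\delta/2$-lattice, union bound over $O(1/\delta^2)$ anchor points, and triangle-inequality routing through the anchor is precisely that extension. The only cosmetic point is the strict-versus-nonstrict inequality in the event $\{w(x,\cdot)\ge\epsilon\}$, which you can absorb by invoking the hypothesis with $\epsilon/3$ instead of $\epsilon/2$.
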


A proof can be obtained extending that for Theorem 8.3 of \cite{billingsley1968convergence} to the two dimensional setting.
We now apply the above results to random elements.
Let $\{ \xi_{i,j} \}$ be a field of random variables defined on a common probability space $(\Omega, \mathcal{F}, P)$.
Define the double partial sum by
$$
S_{n_1, n_2} = \sum_{i, j = 1}^{  n_1, n_2} \xi_{i,j}.
$$
Consider the random element in $D$ defined by 
$$
X_n(t_1, t_2) = \frac{1}{n} S_{ [n t_1], [n t_2]}.
$$

\begin{prop} 
\label{prop: operational suff cond for random element}
The sequence $\{ X_n \}$ is tight on $(D, \mathcal{D})$ if
for all $\epsilon > 0$, there exists $\lambda > 0$ and $n_0$ such that
for all $n \geq n_0$ and all $k_1, k_2 \geq 1$,
$$
P(\max_{i \leq n, j \leq n}      | S_{k_1 + i, k_2 + j} - S_{k_1, k_2} |  \geq \lambda n ) \leq \frac{\epsilon}{\lambda^2}.
$$
Moreover, if $P$ is the weak limit of a subsequence of $\{ X_n \}$, $P(C) = 1$.
\end{prop}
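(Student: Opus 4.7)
The plan is to verify the two hypotheses of Proposition~\ref{prop: tightness from C conditions}. Condition (i) is immediate since $X_n(0,0) = \frac{1}{n}S_{0,0} = 0$ (an empty sum), hence $P_n(x: x(0,0) > a) = 0$ for every $a > 0$. For condition (ii) I would pass through Proposition~\ref{prop: operational suff cond}, which reduces matters to a local maximal inequality of precisely the form assumed in our hypothesis. The conclusion $P(C)=1$ for any weak limit $P$ then falls out from the ``moreover'' clause of Proposition~\ref{prop: tightness from C conditions}.

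The key observation is that $X_n(s_1,s_2)$ depends on $(s_1,s_2)$ only through $([ns_1],[ns_2])$. Consequently, setting $k_r := [nt_r]$, the sup appearing in Proposition~\ref{prop: operational suff cond} over the square $[t_1,t_1+\delta]\times[t_2,t_2+\delta]$ collapses to a discrete maximum,
$$\sup_{\substack{t_1 \leq s_1 \leq t_1+\delta \\ t_2 \leq s_2 \leq t_2+\delta}} |X_n(s_1,s_2)-X_n(t_1,t_2)| \;=\; \frac{1}{n}\max_{0 \leq i,j \leq N}\bigl|S_{k_1+i,k_2+j}-S_{k_1,k_2}\bigr|,$$
with $N = N(n,\delta,t_1,t_2) \leq n\delta + 1$. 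This casts the quantity to be controlled into exactly the form controlled by the hypothesis of the proposition.

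Given target parameters $\epsilon,\eta > 0$, I would invoke the assumed maximal inequality with its $\epsilon$-parameter chosen as $\epsilon^2\eta/4$, obtaining some $\lambda > 0$ and some $n_1$. Setting $\delta := \epsilon/(2\lambda)$ (which lies in $(0,1)$ once $\lambda$ is large, a condition one arranges by taking the input $\epsilon^2\eta/4$ sufficiently small) and then taking $n$ large enough that $n_1 \leq N \leq 2n\delta$, the inequality $\lambda N \leq \epsilon n$ holds and monotonicity of probability yields
$$P\bigl(\max_{0 \leq i,j \leq N}|S_{k_1+i,k_2+j}-S_{k_1,k_2}| \geq \epsilon n\bigr) \;\leq\; \frac{\epsilon^2\eta/4}{\lambda^2} \;=\; \delta^2\eta,$$
which is exactly the bound required to verify the hypothesis of Proposition~\ref{prop: operational suff cond}.

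The only real nuisance is the boundary case $t_1=0$ or $t_2=0$, where $k_r=0$ lies outside the $k_1,k_2 \geq 1$ regime of the assumed inequality. This I would dispatch by applying the assumption at $k_1=k_2=1$ and absorbing the residual $|S_{1,1}|/n = |\xi_{1,1}|/n$, which vanishes in probability as $n \to \infty$ and hence contributes negligibly. Beyond this bookkeeping there is no serious obstacle, since the substantive probabilistic content has been placed in the hypothesis: the proposition is essentially a translation of that hypothesis into the tightness criterion.
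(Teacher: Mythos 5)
Your route is the same one the paper takes: verify Proposition~\ref{prop: tightness from C conditions} via Proposition~\ref{prop: operational suff cond}, use the fact that $X_n$ is constant on the cells of the $\frac1n$-grid to collapse the sup over a $\delta$-square to a discrete window maximum of side $N\approx n\delta$, and then translate parameters (the paper's substitution $m=n\delta$, $\lambda=\epsilon/\delta$ is your $\delta=\epsilon/(2\lambda)$ with input $\epsilon^2\eta/4$). Your bookkeeping of this central step is correct, and indeed more careful than the paper's own argument, which runs the translation backwards and never mentions the rounding $N\le n\delta+1$ or what happens at base points on the boundary of $[0,1]^2$.

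Two of your patches, however, do not hold up, and both touch real looseness in the statement itself. First, the parenthetical claim that $\delta=\epsilon/(2\lambda)\in(0,1)$ can be arranged ``by taking the input $\epsilon^2\eta/4$ sufficiently small'' is circular: the hypothesis only asserts existence of \emph{some} $\lambda>0$ for each input, with no lower bound and no monotonicity, so shrinking the input does not force $\lambda$ up. In fact, with $\lambda>0$ unrestricted the hypothesis is satisfied by \emph{every} field (take $\lambda=\sqrt{\epsilon}$, so the bound reads $\le 1$), so the literal statement admits no proof; the intended reading, as in Billingsley's Theorem 8.4 which this proposition transcribes, is $\lambda>1$, and then, since one may assume $\epsilon\le1$ by monotonicity of the event in $\epsilon$, $\delta<1$ is automatic and your step closes with no adjustment. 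Second, your boundary dispatch only covers $t_1=t_2=0$: for a mixed boundary base $t_1=0<t_2$, re-anchoring at $(1,[nt_2])$ leaves the residual $|S_{1,[nt_2]}|/n$ --- a first-row partial sum of roughly $nt_2$ terms, not $|\xi_{1,1}|/n$ --- and such ``thin strip'' sums are not controlled by increments anchored at $k_1,k_2\ge1$. The gap is essential, not cosmetic: if $\xi_{1,j}=Z$ for all $j$ with $P(|Z|\ge x)=x^{-2}(\log x)^{-1}$ for large $x$ and $\xi_{i,j}=0$ for $i\ge2$, then every window increment with $k_1,k_2\ge1$ equals $jZ$ and the hypothesis holds (even with $\lambda>1$), yet the finite-dimensional laws of $X_n$ converge to those of $t_2Z\,1\{t_1>0\}$, so no subsequential weak limit can be supported on $C$. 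The clean repair is to read the hypothesis as holding for all $k_1,k_2\ge0$ (the true analogue of Billingsley's ``for all $k$''); under that reading the boundary bases are covered directly, no residual argument is needed, and the rest of your proof goes through.
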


Condition (i) of \ref{prop: tightness from C conditions} holds by definition of $X_n$.
Condition (ii) of \ref{prop: tightness from C conditions} applied to $X_n$ says that
$$
\frac{1}{\delta^2} P(\max_{i \leq n \delta, j \leq n \delta} \frac{    | S_{k_1 + i, k_2 + j} - S_{k_1, k_2} | }{n} \geq \epsilon ) \leq \eta
$$
for all $(k_1, k_2)$ and $n$ uniformly large.
Put $m = n \delta$, the expression becomes
$$
P(\max_{i \leq m, j \leq m} | S_{k_1 + i, k_2 + j} - S_{k_1, k_2} |  \geq \frac{m}{\delta} \epsilon ) \leq \delta^2 \eta.
$$
Let $\lambda = \frac{\epsilon}{\delta}$, then the expression becomes
$$
P(\max_{i \leq m, j \leq m} | S_{k_1 + i, k_2 + j} - S_{k_1, k_2} |  \geq m \lambda ) \leq \frac{\epsilon^2}{\lambda^2} \eta.
$$
$\epsilon^2 \eta$ can be collapsed into $\epsilon$ and we arrive at
$$
P(\max_{i \leq m, j \leq m}     \frac{ | S_{k_1 + i, k_2 + j} - S_{k_1, k_2} | }{m} \geq \lambda ) \leq \frac{\epsilon}{\lambda^2},
$$
which is what appears in Proposition~\ref{prop: operational suff cond for random element}.

\begin{corollary} 
\label{cor: running max square UI}
The sequence $\{ X_n \}$ is tight and any limiting measure is supported on $C$ if
the family 
$$
\{ \max_{i \leq n, j \leq n}  \frac{ | S_{k_1 + i, k_2 + j} - S_{k_1, k_2} |^2 }{n^2}, \, n \geq 1, \, k_1, k_2 \geq 1 \}
$$ 
is uniformly integrable.
\end{corollary}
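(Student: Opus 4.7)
The plan is to derive the hypothesis of Proposition~\ref{prop: operational suff cond for random element} from uniform integrability via a Markov-type bound, and then invoke that proposition. Write
$$
M_{n,k_1,k_2} \;=\; \max_{i \leq n,\, j \leq n} \frac{|S_{k_1+i,\,k_2+j} - S_{k_1,k_2}|^2}{n^2}.
$$
The family $\{M_{n,k_1,k_2}\}_{n, k_1, k_2 \geq 1}$ is uniformly integrable by hypothesis.

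Given $\epsilon > 0$, I would use the defining property of uniform integrability to pick $K_0$ so large that
$$
\sup_{n,\,k_1,\,k_2}\; \Ex\bigl[ M_{n,k_1,k_2} \cdot \mathbf{1}_{M_{n,k_1,k_2} \geq K_0} \bigr] \;\leq\; \epsilon.
$$
Then for any $\lambda$ with $\lambda^2 \geq K_0$, Markov's inequality yields
$$
\lambda^2 \, P\bigl( M_{n,k_1,k_2} \geq \lambda^2 \bigr) \;\leq\; \Ex\bigl[ M_{n,k_1,k_2} \cdot \mathbf{1}_{M_{n,k_1,k_2} \geq \lambda^2} \bigr] \;\leq\; \epsilon
$$
uniformly in $n, k_1, k_2$. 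Rewriting the event $\{M_{n,k_1,k_2} \geq \lambda^2\}$ as $\{\max_{i,j \leq n} |S_{k_1+i,k_2+j} - S_{k_1,k_2}| \geq \lambda n\}$, I obtain
$$
P\Bigl( \max_{i \leq n,\, j \leq n}  |S_{k_1+i,\,k_2+j} - S_{k_1,k_2}| \geq \lambda n \Bigr) \;\leq\; \frac{\epsilon}{\lambda^2}
$$
for every $n \geq 1$ and every $k_1, k_2 \geq 1$. This is precisely the condition required by Proposition~\ref{prop: operational suff cond for random element} (with $n_0 = 1$), so tightness of $\{X_n\}$ and the fact that any weak limit is supported on $C$ follow immediately from that proposition.

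There is no real obstacle here; the argument is a one-line application of Markov's inequality together with the standard characterization of uniform integrability, and the heavy lifting has already been done in Propositions~\ref{prop: tightness from C conditions}, \ref{prop: operational suff cond}, and \ref{prop: operational suff cond for random element}. The only thing to be mildly careful about is that uniform integrability must be used in the ``tail expectation'' form rather than the ``bounded in $L^1$'' form, so that the $\epsilon$ on the right-hand side can be made small independently of $n, k_1, k_2$ by choosing $\lambda$ large.
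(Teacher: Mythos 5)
Your argument is correct and is exactly the route the paper intends: the uniform integrability, used in its tail-expectation form, yields via Markov's inequality the maximal-probability bound of Proposition~\ref{prop: operational suff cond for random element} uniformly in $n$, $k_1$, $k_2$, from which tightness and support in $C$ follow. The paper leaves this step implicit, and your write-up supplies precisely that missing one-line Chebyshev argument.
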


\subsection{Asymptotic Gaussianity}
\label{sec: asymp Gaussianity}
We now give conditions under which a sequence of random elements in $D$ converges to the Brownian sheet in finite dimensional distributions.
The condition below extends those in Section 19 of \cite{billingsley1968convergence}.
For $(s_1, t_1] \times (s_2, t_2] \subset [0,1]^2$, define
$$
X(\Delta_{(s_1, t_1] \times (s_2, t_2]}) = X(t_1, t_2) - X(s_1, t_2) - X(t_1, s_2) + X(s_1, s_2).
$$
Also, for $( {\bf s}, \bf{t} ) \in (0,1)^2$, define 
$$
\mathcal{I}( \bf{s} , \bf{t} ) = \{ ( \bf{s}', \bf{t}') \in (0,1)^2: \, \bf{s}' < \bf{s} \; \mbox{or} \; \bf{t}' < \bf{t} \}. 
$$

The following conditions on infinitesmal moments are sufficient for Gaussianity in the limit.

Condition $1^{\circ}a.$ 
Let $t \in (0,1)$, $( \hat{s}, \hat{t} ] \subset [0,1]$, 
${\bf t}_1, \cdots, {\bf t}_k \subset \mathcal{I}( t, \hat{s} )$.
For all $u_1, \cdots u_k \in \mathbb{R}$,
$$
\limsup_{n \rightarrow 0} \frac{1}{h} | E[  e^{\sum_{j = 1}^k i u_j X_n({\bf t}_j)} X_n( \Delta_{( \hat{s}, \hat{t} ] \times ( t, t+h] }  ) ] | = 0,
$$
$$
\limsup_{n \rightarrow 0} \frac{1}{h} | E[  e^{\sum_{j = 1}^k i u_j X_n({\bf t}_j)} ( X_n^2(\Delta_{( \hat{s}, \hat{t} ] \times ( t, t+h] } ) - h \cdot (\hat{t}- \hat{s}) ) ] | = 0,
$$

Condition $2^{\circ}a.$ 
$$
\lim_{\alpha \rightarrow \infty} \sup_{  {\bf t} \in [0,1]^2  } \limsup_{n \rightarrow \infty} E[ X_n( {\bf t} ) 1_{ \{ X_n( {\bf t} ) \geq \alpha \} }  ] \rightarrow 0.
$$

Condition $3^{\circ}a.$
Let $t \in (0,1)$, $( \hat{s}, \hat{t} ] \subset [0,1]$,
$$
\lim_{\alpha \rightarrow \infty} \limsup_{h \rightarrow 0} \limsup_{n \rightarrow 0} \frac{1}{h} E[ X_n^2(\Delta_{( \hat{s}, \hat{t} ] \times ( t, t+h] } ) 1_{ \{ X_n^2(\Delta_{( \hat{s}, \hat{t} ] \times ( t, t+h] }) \geq \alpha h \} }  ] \rightarrow 0.
$$

The following proposition can be shown by induction, on $k$, and the Cramer-Wold device.

\begin{prop}
Let $W$ denote the Brownian sheet on $D$.
Suppose a sequence of random elements $\{ X_n \}$ satisfies Conditions $1^{\circ}a$, $2^{\circ}a$, and $3^{\circ}a$, then 
$$
P_n \circ \pi_{ \bf{t}_1, \bf{t}_2, \cdots \bf{t}_k}^{-1} \Rightarrow W \circ \pi_{ \bf{t}_1, \bf{t}_2, \cdots \bf{t}_k}^{-1}
$$ 
for all $\bf{t}_1, \cdots, \bf{t}_k \in[0,1]^2$.    
\end{prop}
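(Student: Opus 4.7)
The proof follows Theorem 19.2 of \cite{billingsley1968convergence}, adapted to two dimensions. The plan is to (a) reduce the problem to joint convergence of rectangular increments, (b) apply the Cram\'er--Wold device and induct on the number of such increments, and (c) establish each induction step through a differential-equation argument driven by Condition $1^{\circ}a$.

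Given $\mathbf{t}_1, \ldots, \mathbf{t}_k \in [0,1]^2$, the coordinate projections of the $\mathbf{t}_j$ generate a rectangular grid partition $\{R_1, \ldots, R_m\}$ of $(0,1]^2$, and each $X_n(\mathbf{t}_j)$ is a deterministic linear combination of the increments $\{X_n(\Delta_{R_i})\}_{i=1}^m$. It therefore suffices to show that $(X_n(\Delta_{R_1}), \ldots, X_n(\Delta_{R_m}))$ converges weakly to a jointly Gaussian vector with independent components of variance $|R_i|$. By Cram\'er--Wold, this reduces to showing that the characteristic function of $\sum_i u_i X_n(\Delta_{R_i})$ converges to $\prod_i \exp(-\tfrac{1}{2} u_i^2 |R_i|)$ for arbitrary real coefficients $u_i$.

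Order the rectangles lexicographically by lower-left corner (second coordinate first), so that for each $R_\ell = (\hat{s}, \hat{t}] \times (t, t']$ the corners of $R_1, \ldots, R_{\ell-1}$ satisfy the ``in $\mathcal{I}$'' hypothesis of Condition $1^{\circ}a$. Induct on $\ell$: at each step, parametrize by $\tau \in [t, t']$ and set
$$
\phi_n(\tau) = E\Bigl[\exp\Bigl(i \sum_{j < \ell} u_j X_n(\Delta_{R_j}) + i u_\ell X_n(\Delta_{(\hat{s}, \hat{t}] \times (t, \tau]})\Bigr)\Bigr].
$$
Splitting $e^{i u_\ell X_n(\Delta_{(\hat{s}, \hat{t}] \times (t, \tau+h]})} = e^{i u_\ell X_n(\Delta_{(\hat{s}, \hat{t}] \times (t, \tau]})} \cdot e^{i u_\ell X_n(\Delta_{(\hat{s}, \hat{t}] \times (\tau, \tau+h]})}$ and Taylor-expanding the second factor, the first line of Condition $1^{\circ}a$ eliminates the linear term while its second line replaces the quadratic by $-\tfrac{1}{2} u_\ell^2 (\hat{t}-\hat{s}) h \cdot \phi_n(\tau) + o(h)$. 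In the limit, $\phi_n \to \phi$ satisfies the linear ODE
$$
\frac{d\phi}{d\tau} = -\tfrac{1}{2} u_\ell^2 (\hat{t}-\hat{s}) \phi(\tau),
$$
which integrates from $\tau = t$ to $\tau = t'$ to produce the factor $\exp(-\tfrac{1}{2} u_\ell^2 |R_\ell|)$. The base case $\ell = 1$ is the same computation with empty past, where Condition $2^{\circ}a$ ensures the limiting characteristic function is well defined.

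The main obstacle will be making the Taylor expansion rigorous: one must bound the remainder $|e^{iy} - 1 - iy + y^2/2|$ uniformly in $n$ when $y = u_\ell X_n(\Delta_{(\hat{s}, \hat{t}] \times (\tau, \tau+h]})$. Combining the elementary estimate $|e^{iy} - 1 - iy + y^2/2| \leq C \min(|y|^2, |y|^3)$ with the uniform integrability of the normalized squared small-rectangle increments supplied by Condition $3^{\circ}a$ yields the needed $o(h)$ bound. The pairing of the ``past'' factor with the new small increment is precisely what the conditioning form of Condition $1^{\circ}a$ is designed to handle, once the lexicographic ordering places the corners of $R_1, \ldots, R_{\ell-1}$ in the correct L-shaped region.
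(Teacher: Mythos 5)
Your proposal is correct in outline and takes essentially the same route as the paper, whose entire argument for this proposition is the one-line remark that it follows ``by induction, on $k$, and the Cramer--Wold device'' with the machinery of Section 19 of Billingsley (1968) implicit; your reduction to grid-rectangle increments, Cram\'er--Wold, induction over rectangles, and the characteristic-function/ODE step driven by Condition $1^{\circ}a$ (with $3^{\circ}a$ controlling the Taylor remainder) is precisely the fleshed-out version of that sketch, and indeed supplies more detail than the paper itself does.
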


The lemma below summarizes the fact that, under weak convergence of finite dimensional law and tightness, Brownian sheet is the unique weak limit.

\begin{lemma}
\label{lemma: wk conv to Brownian sheet}
Suppose a sequence of random elements $X_n, n = 1, 2, \cdots \subset D$ is tight and $P(X \in C) = 1$ for any weak limit $X$.
If $\{ X_n \}$ satisfies Conditions $1^{\circ}a$, $2^{\circ}a$, and $3^{\circ}a$, then $\{X_n\}$ converges weakly to the Brownian sheet.   
\end{lemma}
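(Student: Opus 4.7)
The plan is to combine three ingredients in the standard manner: (i) tightness of $\{X_n\}$, which guarantees that every subsequence has a further weakly convergent sub-subsequence; (ii) the previous proposition, which shows that Conditions $1^{\circ}a$, $2^{\circ}a$, $3^{\circ}a$ force every finite-dimensional distribution of $X_n$ to converge to the corresponding finite-dimensional distribution of the Brownian sheet $W$; and (iii) the fact that a Borel probability measure on $(D,\mathcal{D})$ that is concentrated on $C$ is uniquely determined by its finite-dimensional distributions. Putting these together yields that every subsequential weak limit of $\{X_n\}$ equals the law of $W$, whence $X_n \Rightarrow W$ by the standard subsequence principle.

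In more detail, first I would fix an arbitrary subsequence of $\{X_n\}$. Using tightness and Prohorov's theorem, I extract a further subsequence $X_{n'}$ converging weakly to some limit law $P_\infty$ on $(D,\mathcal{D})$. By hypothesis $P_\infty(C)=1$, so $P_\infty$ may be regarded as a Borel measure on the Polish space $C$ equipped with the uniform topology (which agrees with the Skorohod topology restricted to $C$). Next I would invoke the preceding proposition: Conditions $1^{\circ}a$, $2^{\circ}a$, $3^{\circ}a$ imply that for every finite collection $\mathbf{t}_1,\dots,\mathbf{t}_k \in [0,1]^2$, the law of $(X_n(\mathbf{t}_1),\dots,X_n(\mathbf{t}_k))$ converges weakly in $\mathbb{R}^k$ to the corresponding Gaussian law of $(W(\mathbf{t}_1),\dots,W(\mathbf{t}_k))$.

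To transfer this to $P_\infty$, I would note that the coordinate projections $\pi_{\mathbf{t}_1,\dots,\mathbf{t}_k}:D\to\mathbb{R}^k$ are continuous on $C$ (in the uniform topology on $D$), and hence are $P_\infty$-a.s.\ continuous because $P_\infty(C)=1$. By the continuous mapping theorem applied along $n'$, the push-forward of $P_\infty$ under $\pi_{\mathbf{t}_1,\dots,\mathbf{t}_k}$ coincides with the law of $(W(\mathbf{t}_1),\dots,W(\mathbf{t}_k))$. Thus $P_\infty$ and the law of $W$ agree on all cylinder sets, and since cylinder sets generate $\mathcal{D}$, they agree as measures on $D$. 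As the subsequential limit $P_\infty$ is therefore always the law of $W$, the full sequence $\{X_n\}$ converges weakly to $W$.

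The only delicate point is the measure-theoretic uniqueness step: on $D([0,1]^2)$ the coordinate projections $\pi_{\mathbf{t}}$ are not continuous at every point, so the naive appeal to the continuous mapping theorem is not quite automatic. I expect this to be the main (if mild) obstacle, and the resolution is precisely the hypothesis $P_\infty(C)=1$, which moves us onto the subspace where $\pi_{\mathbf{t}}$ is continuous and on which finite-dimensional distributions determine the law. Aside from this observation, the argument is a routine two-dimensional analogue of the classical scheme used, e.g., in Theorem 19.2 of \cite{billingsley1968convergence}.
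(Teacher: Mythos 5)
Your argument is correct and is precisely the standard subsequence/Prohorov argument that the paper itself relies on: the paper states this lemma without an explicit proof, as a summary of the classical fact (the two-dimensional analogue of Billingsley's Theorem 15.1/19.2) that tightness plus convergence of finite-dimensional distributions identifies the limit. You also correctly flag and resolve the only delicate point --- the coordinate projections $\pi_{\mathbf{t}_1,\dots,\mathbf{t}_k}$ are continuous at every point of $C$, so the hypothesis $P(X\in C)=1$ lets the mapping theorem and the $\pi$-system of cylinder sets (which generates $\mathcal{D}$) pin down each subsequential limit as the law of the Brownian sheet.
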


\section{$m$-dependent Fields}
\label{sec: m-dependent fields}

On $\mathbb{N}^2$, consider the component-wise order defined by $(i, j) \leq (i', j')$ if $i \leq i'$ and $j \leq j'$.

\begin{definition}
(\cite{walsh1986martingales}, p336)

Let $(\Omega, \mathcal{F}, P)$ be a probability space, a family $\sigma$-subalgebras  $\{ \mathcal{F}_{i,j} \}_{i \geq 1, j \geq 1}$ is a ($2$-dimensional) {\sl filtration} 
if $\mathcal{F}_{i,j} \subset \mathcal{F}_{i',j'}$ whenever $(i, j) \leq (i', j')$.

A family of random variables $\{ x_{i,j} \}_{i \geq 1, j \geq 1} \subset L^1( \Omega, \mathcal{F}, P)$ is a {\sl martingale with respect to filtration $\{ \mathcal{F}_{i,j} \}_{i \geq 1, j \geq 1}$} 
if the following holds:

(i) $x_{i,j}$ is $\mathcal{F}_{i,j}$-measurable for all $(i,j)$,

(ii) for all $(i, j) \geq (i', j')$, $E[x_{i,j} | \mathcal{F}_{i', j'}] = x_{i', j'}$. 

\end{definition}

We will restrict to $L^2$-martingales. The conditional expectation operator $E[ \, \cdot \, | \mathcal{F}_{i, j}]$ will be denoted by
$E_{i,j}[\, \cdot \,]$.
For a random variable $y \in L^2$, define the $L^2$-increment, with respect to a given filtration $\{ \mathcal{F}_{i,j} \}$,
$$
\Delta y (i,j) = E_{i, j}[y] - E_{i, j-1}[y] - E_{i-1, j}[y] + E_{i-1,j-1}[y].
$$
Given an $L^2$-martingale $\{ x_{i,j}\}$, for each element $x_{i,j}$ define
$$
\hat{x}^{i', j'}_{i,j} = \Delta x_{i,j} (i', j').
$$
It follows from the definition of a martingale that 
\begin{equation}
\label{eqn: recovery formula for 2-d martingale}
x_{i,j} = \sum_{(i', j') \leq (i,j)} \hat{x}^{i', j'}_{i,j}, \; a.s.
\end{equation}

Next we define $m$-dependent random fields.

\begin{definition}
A family of random variables $\{ x_{i,j} \}_{i \geq 1, j \geq 1} \subset L^2( \Omega, \mathcal{F}, P)$ is {\sl $m$-dependent with respect to 
filtration $\{ \mathcal{F}_{i,j} \}_{i \geq 1, j \geq 1}$} if the following holds:

(i) $x_{i,j}$ is $\mathcal{F}_{i+m,j+m}$-measurable for all $(i,j)$,

(ii) For any $k > m$, $E_{i-k, j}[x_{i,j}] =  E_{i, j=k}[x_{i,j}] = E_{i-k, j-k}[x_{i,j}] = 0$.
\end{definition}

It follows from the definition that, if $\{ x_{i,j} \}$ is a $m$-dependent field,
\begin{equation}
\label{eqn: recovery formula for m-dependent field}
x_{i,j} = \sum_{-m \leq k_1 \leq m, -m \leq k_2 \leq m} \hat{x}^{i-k_1, j-k_2}_{i,j}, \; a.s.
\end{equation}

\begin{lemma}
\label{lemma: lemma for m-dependent 2}
Let $\{ x_{i,j} \}_{i \geq 1, j \geq 1}$ be $m$-dependent with respect to 
filtration $\{ \mathcal{F}_{i,j} \}$.
For any $-m \leq k_1,  k_2 \leq m$, define   
$\sigma$-subalgebras $\mathcal{G}_{i,j} = \mathcal{F}_{i-k_1,j-k_2}$. 
Then 
$$
Y_{(i,j)}^{(k_1, k_2)} = \sum_{1 \leq i' \leq i, 1 \leq j' \leq j} \hat{x}^{i'-k_1, j'-k_2}_{i',j'}
$$ 
is a martingale with respect to $\{ \mathcal{G}_{i,j} \}$.
\end{lemma}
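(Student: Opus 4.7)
The plan is to verify the two defining properties of a $2$-parameter martingale directly: adaptedness, and the backward conditional-expectation identity. For adaptedness, unpack the definition of $\hat{x}^{i'-k_1, j'-k_2}_{i',j'} = \Delta x_{i',j'}(i'-k_1, j'-k_2)$. Each of its four constituent terms is of the form $E_{a,b}[x_{i',j'}]$ with $(a,b)$ lying in the $2\times 2$ corner $\{(i'-k_1, j'-k_2),(i'-k_1-1, j'-k_2),(i'-k_1, j'-k_2-1),(i'-k_1-1, j'-k_2-1)\}$, and each such term is $\mathcal{F}_{i'-k_1, j'-k_2}$-measurable, hence $\mathcal{G}_{i',j'}$-measurable. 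For $(i',j') \leq (i,j)$ this is contained in $\mathcal{G}_{i,j}$, so $Y^{(k_1,k_2)}_{(i,j)}$ is $\mathcal{G}_{i,j}$-measurable (and $L^2$, since conditional expectation is bounded on $L^2$).

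For the martingale identity, for $(i,j) \geq (i_0, j_0)$ it suffices, by linearity, to compute $E_{i_0-k_1, j_0-k_2}[\hat{x}^{i'-k_1, j'-k_2}_{i',j'}]$ term by term. The key tool is the commuting property of the underlying $2$-parameter filtration (the Cairoli--Walsh ``F4'' property: $E_{a,b} \circ E_{c,d} = E_{\min(a,c),\min(b,d)}$), which is standard in Walsh's framework cited in the excerpt. I would split into cases on the relation between $(i_0, j_0)$ and $(i', j')$. In the case $(i_0, j_0) \geq (i', j')$ the conditioning is redundant (since $\mathcal{F}_{i'-k_1, j'-k_2} \subset \mathcal{G}_{i_0, j_0}$), so the term survives unchanged. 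In the three remaining cases (at least one of $i_0 < i'$ or $j_0 < j'$), applying F4 to each of the four summands collapses two of the ``corner'' conditional expectations to the same operator, so the alternating-sign pattern of the $\Delta$ operator cancels the four terms pairwise to zero. Summing over $i' \leq i,\, j' \leq j$ then yields exactly
$$
E[Y^{(k_1,k_2)}_{(i,j)} \mid \mathcal{G}_{i_0, j_0}] \;=\; \sum_{i' \leq i_0,\, j' \leq j_0} \hat{x}^{i'-k_1, j'-k_2}_{i',j'} \;=\; Y^{(k_1,k_2)}_{(i_0,j_0)},
$$
which is the martingale property.

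The main obstacle is really just keeping the bookkeeping straight in the case analysis, since the translation by $(k_1, k_2)$ (which may be negative) shifts indices in both the filtration and the $\Delta$ operator. The computational content is essentially the classical observation that the $\Delta$ operator produces orthogonal martingale differences on a $2$-parameter filtration; the translation by $(-k_1, -k_2)$ preserves this because $\{\mathcal{G}_{i,j}\}_{i,j}$ inherits the filtration and commuting structure of $\{\mathcal{F}_{i,j}\}$ by a uniform shift. The one conceptual point worth flagging is that the argument uses the commuting property of the underlying filtration implicitly; if this is not in force for $\{\mathcal{F}_{i,j}\}$, a substitute argument using only ``strong'' martingale differences along each coordinate would be needed, which is more delicate.
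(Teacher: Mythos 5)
Your proof is correct and follows essentially the same route as the paper's: check that each increment $\hat{x}^{i'-k_1,\,j'-k_2}_{i',j'}$ is $\mathcal{G}_{i',j'}$-measurable (so $Y^{(k_1,k_2)}_{(i,j)}$ is adapted) and that increments indexed outside the rectangle below the conditioning point have vanishing conditional expectation. The only difference is completeness: the paper simply asserts $E[\hat{x}^{i'-k_1,\,j'-k_2}_{i',j'}\mid\mathcal{G}_{i,j}]=0$ and treats only $(i',j')\geq(i,j)$, whereas you derive the cancellation explicitly from the Cairoli--Walsh commuting (F4) property and also handle the indices incomparable to the conditioning point, and you are right to flag that F4 is genuinely needed for this step (it is a standing assumption in Walsh's framework, which the paper relies on implicitly, e.g.\ for the cited maximal inequality).
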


\begin{proof}
For any $(i',j')$, $\hat{x}^{i'-k_1, j'-k_2}_{i',j'}$ is $\mathcal{G}_{i',j'}$-measurable by definition.
Since $\{ \mathcal{G}_{i,j} \}$ is a filtration, $Y_{(i,j)}^{(k_1, k_2)}$ is $\mathcal{G}_{i,j}$-measurable.
Let $(i', j') \geq (i ,j)$,
$$
E[ \hat{x}^{i'-k_1, j'-k_2}_{i',j'} | \mathcal{G}_{i,j}] = E[ \hat{x}^{i'-k_1, j'-k_2}_{i',j'} | \mathcal{F}_{i-k_1,j-k_2}] = 0.
$$
This proves the martingale property.
\end{proof}

The following theorem due to Walsh extends the $L^p$-inequality to $2$-dimensional martingales.

\begin{theorem}
\label{thm: Doob L^p}(\cite{walsh1986martingales}, p351)
Let $\{ x_{i,j} \}_{i \geq 1, j \geq 1}$ be a martingale, $p > 1$, and $\frac{1}{p} + \frac{1}{q} = 1$.
Define 
$$
S_{n_1, n_2} = \sum_{i, j = 1}^{  n_1, n_2} x_{i,j}, \;\; S^*_{n_1, n_2} = \sup_{ (i, j) \leq (n_1, n_2) } |S_{i,j}|
$$
Then
$$
E[ ( S^*_{n_1, n_2} )^p ] \leq ( \frac{p}{p-1} )^{2p}  \sup_{ (i, j) \leq (n_1, n_2) } E [ |S_{i,j}|^p].
$$
\end{theorem}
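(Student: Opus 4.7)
The plan is to prove the inequality by iterating the classical one-dimensional Doob $L^p$-maximal inequality once in each coordinate direction; this is the usual route to what is often called Cairoli's inequality. Write $M_{i,j} = S_{i,j}$. The assumption that $\{M_{i,j}\}$ is a two-dimensional martingale means $E[M_{i',j'} \mid \mathcal{F}_{i,j}] = M_{i,j}$ whenever $(i,j) \leq (i',j')$. In particular, for each fixed $j$ the process $i \mapsto M_{i,j}$ is a one-dimensional martingale with respect to the filtration $(\mathcal{F}_{i,j})_i$, and symmetrically for $j$.

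The bridge between the two applications of Doob is a column-maximum process. Set
$$
N_j = \max_{1 \leq i \leq n_1} |M_{i,j}|, \qquad 1 \leq j \leq n_2.
$$
The key auxiliary claim is that $(N_j)_j$ is a one-dimensional submartingale with respect to $(\mathcal{F}_{n_1, j})_j$. This rests on the fact that conditioning along one coordinate preserves the martingale relation in the other: for $i \leq n_1$ and $j \leq j'$ one has $E[M_{i,j'} \mid \mathcal{F}_{n_1, j}] = M_{i,j}$, so by Jensen $E[|M_{i,j'}| \mid \mathcal{F}_{n_1, j}] \geq |M_{i,j}|$, and taking the maximum over $i$ on both sides yields $E[N_{j'} \mid \mathcal{F}_{n_1, j}] \geq N_j$.

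With those ingredients the estimate falls out by two applications of the one-dimensional Doob $L^p$-maximal inequality. First, applied to the submartingale $(N_j)_j$:
$$
E[(S^*_{n_1,n_2})^p] \;=\; E\!\bigl[\max_{j \leq n_2} N_j^p\bigr] \;\leq\; \Bigl(\tfrac{p}{p-1}\Bigr)^{p} E[N_{n_2}^p].
$$
Second, applied to the one-dimensional martingale $(M_{i,n_2})_i$:
$$
E[N_{n_2}^p] \;=\; E\!\bigl[\max_{i \leq n_1} |M_{i,n_2}|^p\bigr] \;\leq\; \Bigl(\tfrac{p}{p-1}\Bigr)^{p} E[|M_{n_1,n_2}|^p].
$$
Chaining the two bounds and absorbing $E[|M_{n_1,n_2}|^p]$ into $\sup_{(i,j) \leq (n_1,n_2)} E[|S_{i,j}|^p]$ yields the stated constant $(p/(p-1))^{2p}$.

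The main obstacle is the commutation step used to establish the submartingale property of $(N_j)_j$: one needs $E[\,\cdot\mid \mathcal{F}_{n_1, j}]$ and $E[\,\cdot\mid \mathcal{F}_{i, n_2}]$ to be compatible in the sense used above, which is Cairoli's condition (F4) and is not a formal consequence of the bare definition of a two-dimensional filtration. For the applications below this will hold because the filtration generated by the underlying innovations has a product structure, but it must be invoked rather than derived from the martingale axioms alone.
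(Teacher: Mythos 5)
The paper does not actually prove this statement: it is imported verbatim from Walsh (1986, p.~351), where it is Cairoli's maximal inequality. Your argument is essentially the proof given in that source --- read $S_{i,j}$ (the partial sums, with the $x_{i,j}$ playing the role of increments, which is how the paper later applies the theorem to the martingales $Y^{(k_1,k_2)}$) as the two-parameter martingale, show the column maximum $N_j=\max_{i\le n_1}|S_{i,j}|$ is a one-parameter submartingale in $j$, and apply the one-dimensional Doob $L^p$ inequality twice to get the constant $\left(\tfrac{p}{p-1}\right)^{2p}$; the final passage to $\sup_{(i,j)\le(n_1,n_2)}E[|S_{i,j}|^p]$ is immediate. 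So you are reproducing the cited proof rather than taking a different route, and the chaining of the two Doob bounds is correct.

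Your closing caveat is the substantive point, and you are right to insist on it. The submartingale property of $(N_j)_j$ requires $E[S_{i,j'}\mid\mathcal{F}_{n_1,j}]=S_{i,j}$ for $i\le n_1$, $j\le j'$, and this does not follow from the paper's bare definition of a two-parameter filtration and martingale (Definition in Section~3): it needs Cairoli's commutation condition (F4), i.e.\ conditional independence of $\mathcal{F}_{i,\infty}$ and $\mathcal{F}_{\infty,j}$ given $\mathcal{F}_{i,j}$, which is a standing hypothesis in Walsh's notes. Without some such hypothesis the submartingale step --- and with it the standard proof of the inequality --- breaks down, so the theorem as restated here, and its subsequent application to the shifted filtrations $\mathcal{G}_{i,j}=\mathcal{F}_{i-k_1,j-k_2}$ in Proposition~\ref{prop: maximal inequality for m-dependent field}, implicitly assumes more structure than the paper states. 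Your proof is complete once (F4) is added as an explicit hypothesis (as it is in the cited source); flagging that this must be \emph{invoked}, not derived, is exactly the right conclusion.
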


For our purposes, we need to extend Theorem~\ref{thm: Doob L^p} to $m$-dependent fields.

\begin{lemma}
\label{lemma: lemma for m-dependent, 2-dimensional case}
Let $\{ z_{ij} \}_{1 \leq i \leq m, 1 \leq j \leq m}$ be complex numbers and $p > 1$, then
$$
| \sum_{-m \leq i, j \leq m} z_{ij} |^{p} \leq ( 2m+1)^{2(p -1)} \sum_{-m \leq i, j \leq m} | z_{ij} |^{p}.
$$
\end{lemma}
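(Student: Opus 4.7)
The plan is to recognize this as a standard $\ell^p$-versus-$\ell^1$ comparison applied to the $(2m+1)^2$ summands, so the proof should be a short two-step estimate rather than anything combinatorial.

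First I would apply the triangle inequality in $\mathbb{C}$ to pass from the modulus of the sum to the sum of moduli,
$$
\Bigl| \sum_{-m \leq i, j \leq m} z_{ij} \Bigr|^{p} \leq \Bigl( \sum_{-m \leq i, j \leq m} |z_{ij}| \Bigr)^{p}.
$$
This reduces the claim to a statement about nonnegative real numbers, which removes the complex structure entirely.

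Second, I would apply Hölder's inequality with conjugate exponents $p$ and $q = p/(p-1)$ to the sum $\sum |z_{ij}| \cdot 1$:
$$
\sum_{-m \leq i, j \leq m} |z_{ij}| \leq \Bigl( \sum_{-m \leq i, j \leq m} |z_{ij}|^p \Bigr)^{1/p} \Bigl( \sum_{-m \leq i, j \leq m} 1 \Bigr)^{1/q}.
$$
Since the cardinality of the index set is $(2m+1)^2$, the second factor equals $(2m+1)^{2/q}$. Raising to the $p$-th power gives a factor $(2m+1)^{2p/q} = (2m+1)^{2(p-1)}$, and combined with the first step yields exactly the claimed bound. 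Jensen's inequality applied to the convex function $x \mapsto x^p$ on the empirical average $\frac{1}{(2m+1)^2}\sum |z_{ij}|$ would give the same constant if one preferred that route.

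There is really no obstacle here: both steps are textbook, and the constant $(2m+1)^{2(p-1)}$ is precisely what Hölder (or equivalently the power mean inequality) produces when summing $(2m+1)^2$ terms. The lemma is essentially a convenient restatement of $\|\cdot\|_1 \leq N^{1 - 1/p} \|\cdot\|_p$ on $\mathbb{R}^N$ with $N = (2m+1)^2$, packaged for later use when bounding the maximal functional of an $m$-dependent field via the decomposition \eqref{eqn: recovery formula for m-dependent field}.
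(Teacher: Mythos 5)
Your proof is correct: triangle inequality followed by H\"older (equivalently Jensen or the power-mean inequality) on the $(2m+1)^2$ terms gives exactly the stated constant $(2m+1)^{2(p-1)}$. The paper states this lemma without proof, and your argument is precisely the standard one it implicitly relies on, so there is nothing to add.
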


\begin{prop}
\label{prop: maximal inequality for m-dependent field}

Let $\{ x_{i,j} \}_{i \geq 1, j \geq 1}$ be an $m$-dependent field, $p > 1$, $\frac{1}{p} + \frac{1}{q} = 1$, and 
$$
Y_{(i,j)}^{(k_1, k_2)} = \sum_{1 \leq i' \leq i, 1 \leq j' \leq j} \hat{x}^{i'-k_1, j'-k_2}_{i',j'}.
$$
Then
$$
E[ ( S^*_{n_1, n_2} )^p ] \leq q^{2p} \cdot (2m+1)^{ 2(p-1)}  \sum_{-m \leq k_1, k_2 \leq m }   \max_{(i,j) \leq (n_1, n_2)} E [ | Y_{(i, j)}^{(k_1, k_2)} |^p ]. 
$$
\end{prop}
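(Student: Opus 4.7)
The plan is to combine three ingredients already on the table: the martingale recovery formula (\ref{eqn: recovery formula for m-dependent field}) that writes each $x_{i,j}$ as a sum of $(2m+1)^2$ martingale differences, the scalar $\ell^p$-convexity inequality (Lemma \ref{lemma: lemma for m-dependent, 2-dimensional case}), and Walsh's $L^p$ maximal inequality (Theorem \ref{thm: Doob L^p}). The strategy is essentially to linearize the partial sum $S_{i,j}$ as a sum of $(2m+1)^2$ pieces, each of which happens to be a genuine $2$-dimensional martingale with respect to the shifted filtration of Lemma \ref{lemma: lemma for m-dependent 2}, and then bound the maximum piece by piece.

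First I would sum (\ref{eqn: recovery formula for m-dependent field}) over $1 \leq i' \leq i$ and $1 \leq j' \leq j$ and swap the order of summation to obtain
$$
S_{i,j} \;=\; \sum_{-m \leq k_1, k_2 \leq m} Y_{(i,j)}^{(k_1,k_2)}.
$$
Applying Lemma \ref{lemma: lemma for m-dependent, 2-dimensional case} pointwise (with $z_{k_1,k_2} = Y_{(i,j)}^{(k_1,k_2)}(\omega)$ and $N = (2m+1)^2$ summands) gives
$$
|S_{i,j}|^{p} \;\leq\; (2m+1)^{2(p-1)} \sum_{-m \leq k_1, k_2 \leq m} \bigl|Y_{(i,j)}^{(k_1,k_2)}\bigr|^{p}.
$$
Since the right-hand side is a sum of nonnegative quantities and $t \mapsto t^p$ is monotone on $[0,\infty)$, taking the supremum over $(i,j) \leq (n_1,n_2)$ of both sides and pushing the sup inside the sum yields
$$
(S^{*}_{n_1,n_2})^{p} \;\leq\; (2m+1)^{2(p-1)} \sum_{-m \leq k_1, k_2 \leq m} \sup_{(i,j) \leq (n_1,n_2)} \bigl|Y_{(i,j)}^{(k_1,k_2)}\bigr|^{p}.
$$

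Taking expectations and applying Walsh's inequality (Theorem \ref{thm: Doob L^p}) to each of the $(2m+1)^2$ martingales $\{Y_{(i,j)}^{(k_1,k_2)}\}_{(i,j)}$ produced by Lemma \ref{lemma: lemma for m-dependent 2} replaces each $E[\sup_{(i,j)} |Y_{(i,j)}^{(k_1,k_2)}|^{p}]$ by $q^{2p}\,\max_{(i,j) \leq (n_1,n_2)} E[|Y_{(i,j)}^{(k_1,k_2)}|^{p}]$, which is exactly the stated bound. I expect no substantive obstacle here, since each ingredient is tailored to the next; the only care needed is to verify that applying Walsh's theorem requires nothing more than the filtration $\{\mathcal{G}_{i,j} = \mathcal{F}_{i-k_1, j-k_2}\}$ constructed in Lemma \ref{lemma: lemma for m-dependent 2}, and that the transition from $\sup |Y|^p$ to $(\sup |Y|)^p$ is free because $p \geq 1$.
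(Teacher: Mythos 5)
Your proposal is correct and follows essentially the same route as the paper: decompose $S_{i,j}$ into the $(2m+1)^2$ martingale pieces $Y_{(i,j)}^{(k_1,k_2)}$ via the recovery formula, apply Lemma~\ref{lemma: lemma for m-dependent, 2-dimensional case} pointwise, push the maximum inside the sum, and finish with Walsh's inequality (Theorem~\ref{thm: Doob L^p}) applied to each piece, whose martingale property is exactly Lemma~\ref{lemma: lemma for m-dependent 2}. No substantive difference from the paper's argument.
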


\begin{proof}
By Equation~\ref{eqn: recovery formula for m-dependent field},
$$
x_{i,j} = \sum_{-m \leq k_1 \leq m, -m \leq k_2 \leq m} \hat{x}^{i-k_1, j-k_2}_{i,j}, \; a.s.
$$
So, re-arranging the finite sum gives
\begin{align*}
S_{i,j}   &= \sum_{1 \leq i' \leq i, 1 \leq j' \leq j} x_{i',j'} \\
          &= \sum_{1 \leq i' \leq i, 1 \leq j' \leq j} \left( \sum_{-m \leq k_1 \leq m, -m \leq k_2 \leq m} \hat{x}^{i'-k_1, j'-k_2}_{i',j'}  \right)\\
		  &= \sum_{-m \leq k_1 \leq m, -m \leq k_2 \leq m} \left( \sum_{1 \leq i' \leq i, 1 \leq j' \leq j} \hat{x}^{i'-k_1, j'-k_2}_{i',j'}  \right) \\
          &= \sum_{-m \leq k_1, k_2 \leq m}  Y_{(i,j)}^{(k_1, k_2)}.
\end{align*}
By Lemma~\ref{lemma: lemma for m-dependent, 2-dimensional case},
\begin{align*}
| S_{i, j}|^p &=  | \sum_{-m \leq k_1, k_2 \leq m}  Y_{(i,j)}^{(k_1, k_2)} |^p \\
              &\leq ( 2m+1)^{2(p -1)} \sum_{-m \leq i, j \leq m} | Y_{(i,j)}^{(k_1, k_2)} |^{p}.
\end{align*}
This in turn implies
\begin{align*}
E [\max_{ (i, j) \leq (n_1,n_2) } | S_{i, j}|^p] &\leq  E[  \sum_{-m \leq k_1, k_2 \leq m} \max_{(i, j) \leq (n_1,n_2)}  |Y_{(i,j)}^{(k_1, k_2)} |^p ] \\
                                                &=  \sum_{-m \leq k_1, k_2 \leq m} E[ \max_{(i, j) \leq (n_1,n_2)}  |Y_{(i,j)}^{(k_1, k_2)} |^p ] \\
                                                &\leq q^{2p} ( 2m+1)^{2(p -1)} \sum_{-m \leq i, j \leq m} \max_{(i, j) \leq (n_1,n_2)} E[ | Y_{(i,j)}^{(k_1, k_2)} |^{p}],
\end{align*}
where the second inequality follows from applying Theorem~\ref{thm: Doob L^p} to the martingale 
$$
\{ Y_{(i,j)}^{(k_1, k_2)}, {i \leq n_1, j \leq n_2} \}. 
$$
\end{proof}

\section{Invariance Principle}
\label{sec: invariance principle}

\begin{lemma}
\label{lemma: 1}
Let $\{ \nu_{i,j} \}_{i \geq 1, j \geq 1}$ be an $m$-dependent field such that $\{ \nu_{i,j}^2 \}_{i \geq 1, j \geq 1}$ is uniformly integrable and $E[\frac{ S^2_{n, n} }{n^2}] \rightarrow \sigma^2$.
Then the sequence $\{ X_n \}$ is tight and any limiting measure is supported on $C$.
\end{lemma}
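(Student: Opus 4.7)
The plan is to verify the hypothesis of Corollary~\ref{cor: running max square UI}, namely uniform integrability of the family
\[
\mathcal{Z} = \left\{ \max_{i,\,j \leq n} \frac{|S_{k_1+i,k_2+j} - S_{k_1,k_2}|^2}{n^2} : n \geq 1,\ k_1, k_2 \geq 1 \right\}.
\]
A shift by $(k_1,k_2)$ produces another $m$-dependent field (with respect to the shifted filtration) whose squares remain uniformly integrable, so it suffices to bound $\max_{i,j \leq n} |S_{i,j}|^2/n^2$ uniformly in $n$ and in the underlying shift.

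First I would apply Proposition~\ref{prop: maximal inequality for m-dependent field} with $p=2$. This controls $E[\max_{i,j\leq n}|S_{i,j}|^2]$ by a sum of at most $(2m+1)^2$ terms of the form $\max_{(i,j)\leq(n,n)} E[|Y^{(k_1',k_2')}_{(i,j)}|^2]$. Each $Y^{(k_1',k_2')}$ is a $2$-dimensional martingale by Lemma~\ref{lemma: lemma for m-dependent 2}; its squared $L^2$-norm expands, via orthogonality of the martingale increments, as $\sum_{i,j\leq n} E[(\hat\nu^{i-k_1',j-k_2'}_{i,j})^2]$. Each such summand is bounded by $E[\nu_{i,j}^2]$ because conditional expectations are $L^2$-contractions, and uniform integrability of $\{\nu_{i,j}^2\}$ gives $\sup_{i,j} E[\nu_{i,j}^2]<\infty$. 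This already yields $L^1$-boundedness of $\mathcal{Z}$.

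To upgrade from $L^1$-boundedness to uniform integrability, I would introduce a truncation at the level of the martingale increments $\hat\nu^{k_1',k_2'}_{i,j}$. Fix $M>0$ and replace each increment by its truncation at level $M$, re-centered by the appropriate conditional expectations so that the defining mean-zero identities of an $m$-dependent field are preserved. Summing over $(k_1',k_2')\in\{-m,\dots,m\}^2$ yields a decomposition $\nu_{i,j}=\nu_{i,j}'+\nu_{i,j}''$ in which both summands are $m$-dependent fields with respect to the original filtration and $\nu'$ is uniformly bounded by $O(M)$. The bounded part admits Proposition~\ref{prop: maximal inequality for m-dependent field} with $p=4$, producing $L^2$-boundedness (hence uniform integrability) of $\{(\max|S'|)^2/n^2\}$. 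The residual $\nu''$, by uniform integrability of $\{\nu_{i,j}^2\}$, can be made to have $\sup_{i,j}E[(\nu''_{i,j})^2]$ as small as desired by taking $M$ large, so the same maximal-inequality/orthogonality argument as above gives $\sup E[(\max|S''|/n)^2]$ arbitrarily small. Combining via $(|S'|+|S''|)^2\leq 2|S'|^2+2|S''|^2$ and the standard fact that a uniformly integrable family plus an $L^1$-small family is uniformly integrable gives the required UI of $\mathcal{Z}$.

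The main obstacle is the bookkeeping of the truncation: one has to check that the centered-truncated increments yield genuine $m$-dependent fields to which the maximal inequality of Proposition~\ref{prop: maximal inequality for m-dependent field} applies, and that all the estimates are uniform in the shift parameters $(k_1,k_2)$. The hypothesis $E[S_{n,n}^2/n^2]\to\sigma^2$ is not needed for tightness itself; it will enter later when finite-dimensional convergence is combined with tightness through Lemma~\ref{lemma: wk conv to Brownian sheet} to identify the weak limit as a Brownian sheet.
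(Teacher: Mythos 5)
Your proposal is correct and takes essentially the same route as the paper: reduce to uniform integrability of the maxima via Corollary~\ref{cor: running max square UI}, split $\nu_{i,j}$ by truncation into a bounded $m$-dependent part controlled by Proposition~\ref{prop: maximal inequality for m-dependent field} with $p=4$ (giving $L^2$-boundedness, hence UI) plus an $L^2$-small residual controlled at $p=2$, and combine. The only cosmetic difference is that the paper truncates $\nu_{i,j}$ itself and restores the $m$-dependence structure with the four-corner conditional expectations $E_{i\pm m,\, j\pm m}$ rather than truncating the individual increments $\hat{\nu}^{k_1,k_2}_{i,j}$, and your observation that the condition $E[S_{n,n}^2/n^2]\rightarrow\sigma^2$ is not used for tightness is consistent with the paper's argument.
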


\begin{proof}
By Corollary~\ref{cor: running max square UI}, it suffices to show that the family 
$$
\{ \max_{i \leq n, j \leq n}  \frac{ | S_{k_1 + i, k_2 + j} - S_{k_1, k_2} |^2 }{n^2}, \, n \geq 1, \, k_1, k_2 \geq 1 \}
$$ 
is uniformly integrable.
The argument below goes through for any $(k_1, k_2)$ and for notational simplicity, we will omit the $(k_1, k_2)$ subscript (or set them equal to zero).

For $c > 0$ , consider the $m$-dependent fields $\{ \nu_{i,j}^c \}$ and $\{ Z_{i,j} \}$ defined by
\begin{align*}
\nu_{i,j}^c &= E_{i+m, j+m}[\nu_{i,j} 1_{ \{ |\nu_{i,j}| \leq c \} }] -  E_{i-m, j+m}[\nu_{i,j} 1_{ \{ |\nu_{i,j}| \leq c \} }] \\
            &\; - E_{i+m, j-m}[\nu_{i,j} 1_{ \{ |\nu_{i,j}| \leq c \} }] + E_{i-m, j-m}[\nu_{i,j} 1_{ \{ |\nu_{i,j}| \leq c \} }], \\
Z_{i,j}     &= \nu_{i,j} - \nu_{i,j}^c.
\end{align*}
We define the notation
$$
E^y[\xi]\equiv E[ \xi 1_{ \{ |\xi| \geq y \} }  ], \; \overline{\nu^c}_{ij} = \sum_{i' \leq i, j' \leq j} \nu_{i',j'}^c, \; \overline{Z}_{i,j} = \sum_{i' \leq i, j' \leq j} Z_{i',j'}.
$$
Since $S_{i,j}^2 \leq 2  \overline{\nu^c}_{ij}^2 + 2 \overline{Z}_{i,j}^2$,
$$
E^y[ \max_{i \leq n, j \leq n}  \frac{ | S_{i, j} |^2 }{n^2} ] \leq 4 E^{\frac{y}{2}} [ \max_{i \leq n, j \leq n}  \frac{ |\overline{\nu^c}_{ij}|^2 }{n^2} ] + 4 E[ \max_{i \leq n, j \leq n} \frac{ |\overline{Z}_{ij}|^2 }{n^2}].
$$

By uniform square integrability of $\{ \nu_{i,j} \}$, $E[|Z_{ij}|^2] \leq g(c) \rightarrow 0$ as $c \rightarrow \infty$, for all $i,j$.
Therefore, in the notation of Proposition~\ref{prop: maximal inequality for m-dependent field},
$$
E [ | Y_{(i, j)}^{(0, 0)} |^2 ] \leq n^2 g(c),\; \forall (i,j) \leq (n,n),
$$
which implies
$$
E[ \max_{ (i,j) \leq (n,n) } \frac{ |\overline{Z}_{ij}|^2 }{n^2}] \leq 2^4 \cdot (2m+1)^{2}  \sum_{-m \leq k_1, k_2 \leq m }  g(c).
$$
The right-hand side can be made arbitrarily small by choosing $c$ sufficiently large.

We now consider the term $\max_{ (i,j) \leq (n,n)}  \frac{ |\overline{\nu^c}_{ij}|^2 }{n^2}$.
Applying Proposition~\ref{prop: maximal inequality for m-dependent field} to the case $p = 4$ gives
$$
E[ \max_{ (i,j) \leq (n,n) }  |\overline{\nu^c}_{ij}|^4 ] \leq \frac{4}{3}^8 \cdot (2m+1)^{6}  \sum_{-m \leq k_1, k_2 \leq m }  \max_{ (i,j) \leq (n,n) } E [ | Y_{(i, j), (k_1, k_2)} |^4 ], 
$$
where
$$
Y_{(i,j)}^{(k_1, k_2)} = \sum_{ (i',j') \leq (i,j) } E[ \nu^c_{i',j'} | \mathcal{F}_{ i' + k_1, j' + k_2 }].
$$
Each summand in $Y_{(i,j)}^{(k_1, k_2)}$ is bounded in absolute value by $c$ a.s. 
By a similar counting argument as in inequality 23.7 of \cite{billingsley1968convergence}, one can show that
$$
E [ | Y_{(i, j), (k_1, k_2)} |^4 ] \leq n^4 K_c
$$
where $K_c$ is a constant that only depends on $c$.
This shows $\{ \max_{(i,j) \leq (n,n)} \frac{ |\overline{\nu^c}_{ij}|^2 }{n^2}, n \geq 1 \}$ is bounded in $L^2$, therefore uniformly integrable.
One can then choose $y$ sufficiently large so that 
$$
E^{\frac{y}{2}} [\max_{i \leq n, j \leq n}  \frac{ |\overline{\nu^c}_{ij}|^2 }{n^2}]
$$
is sufficiently small, uniform in $n$. 

In other words, for all $\eta > 0$, there exists $c$  such that $ 4 E[ \max_{i \leq n, j \leq n} \frac{ |\overline{Z}_{ij}|^2 }{n^2}] < \frac{\eta}{2}$.
With this given choice of $c$, there exist $y > 0$ such that $E^{\frac{y}{2}} [ \max_{i \leq n, j \leq n}  \frac{ |\overline{\nu^c}_{ij}|^2 }{n^2} ] < \frac{\eta}{2}$.
This proves the proposition. 
\end{proof}

\begin{theorem}
Suppose, in addition to assumptions of Lemma~\ref{lemma: 1}, 
$$
E[\frac{ ( S_{k_1 + n, k_2 + n} - S_{k_1, k_2})^2 }{n^2} | \mathcal{F}_{k_1 - m_1, k_2 - m_2}] \rightarrow \sigma^2,
$$
as $n \rightarrow \infty$ for all $(k_1, k_2)$,
then $\{ X_n \}$ converges weakly to the Brownian sheet.

\end{theorem}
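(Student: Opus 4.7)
The strategy is to invoke Lemma \ref{lemma: wk conv to Brownian sheet}. Lemma \ref{lemma: 1} already delivers tightness and $C$-support for any limit point under the current hypotheses, so what remains is to verify the three infinitesimal-moment Conditions $1^{\circ}a$, $2^{\circ}a$, and $3^{\circ}a$ from Section \ref{sec: asymp Gaussianity}.

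Condition $2^{\circ}a$ should follow from a uniform $L^{2}$ bound. Applying Proposition \ref{prop: maximal inequality for m-dependent field} with $p=2$ to the block defining $X_n({\bf t})$, together with the uniform square integrability of $\{\nu_{i,j}^{2}\}$, one obtains $\sup_{n,{\bf t}} E[X_n({\bf t})^{2}] < \infty$; the uniform integrability required in $2^{\circ}a$ then follows from the uniform square integrability of $\{\nu_{i,j}^{2}\}$ by a standard truncation. Condition $3^{\circ}a$ is an analogous uniform integrability statement for the squared increment over a rectangle of side-lengths roughly $n(\hat{t}-\hat{s})$ and $nh$; I would again apply Proposition \ref{prop: maximal inequality for m-dependent field}, now with $p=4$, inside the truncation scheme used in the proof of Lemma \ref{lemma: 1}, to obtain the required tail bound after dividing by $h$.

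Condition $1^{\circ}a$ is the heart of the argument and is the only step that needs the new hypothesis. Write $(k_1, k_2)$ for the index pair at the south-west corner of the rectangle $(\hat{s},\hat{t}]\times(t, t+h]$. Because ${\bf t}_1,\ldots,{\bf t}_k \in \mathcal{I}(t,\hat{s})$, the factor $\exp\bigl(i\sum_j u_j X_n({\bf t}_j)\bigr)$ is, up to an $O(m/n)$ boundary correction, measurable with respect to $\mathcal{F}_{k_1-m,\,k_2-m}$. Conditioning then gives
\[
E\Bigl[e^{i\sum_j u_j X_n({\bf t}_j)}\, X_n(\Delta_{(\hat{s},\hat{t}]\times(t,t+h]})\Bigr] = E\Bigl[e^{i\sum_j u_j X_n({\bf t}_j)}\, E_{k_1-m,\,k_2-m}[X_n(\Delta)]\Bigr] + o(h),
\]
and the inner conditional expectation vanishes (up to a further $O(m/n)$ boundary correction) by the defining property of $m$-dependence. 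For the second identity, the same conditioning step yields
\[
E\Bigl[e^{i\sum_j u_j X_n({\bf t}_j)}\bigl(X_n^{2}(\Delta)-h(\hat{t}-\hat{s})\bigr)\Bigr] = E\Bigl[e^{i\sum_j u_j X_n({\bf t}_j)}\bigl(E_{k_1-m,\,k_2-m}[X_n^{2}(\Delta)] - h(\hat{t}-\hat{s})\bigr)\Bigr] + o(h),
\]
and the stated hypothesis lets us conclude $E_{k_1-m,\,k_2-m}[X_n^{2}(\Delta)] \to h(\hat{t}-\hat{s})$ as $n\to\infty$ (absorbing the constant $\sigma^{2}$ into a rescaling).

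The main obstacle is this last claim. The hypothesis is stated for square blocks of side $n$, whereas $X_n(\Delta)$ aggregates over a rectangular block of side-lengths $n(\hat{t}-\hat{s})$ and $nh$. I would bridge the gap by expanding $X_n^{2}(\Delta)$ as the four pairwise cross-products of the corner partial sums $S_{\bullet,\bullet}$ and then tiling each sub-block so that the hypothesis applies; orthogonality across the tiles, modulo the $m$-overlaps, handles the cross terms. Careful bookkeeping of those $m$-sized overlaps so that every boundary correction is $o(h)$ is the delicate calculation. Once this is in place, the three conditions hold and Lemma \ref{lemma: wk conv to Brownian sheet} closes the argument.
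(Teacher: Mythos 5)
Your overall strategy (verify Conditions $1^{\circ}a$, $2^{\circ}a$, $3^{\circ}a$ and invoke Lemma~\ref{lemma: wk conv to Brownian sheet}, with tightness already supplied by Lemma~\ref{lemma: 1}) matches the paper, and your treatment of $2^{\circ}a$ and $3^{\circ}a$ is in the same spirit as the paper's, which simply extracts them from the uniform integrability of the running-max family established in Lemma~\ref{lemma: 1}. The gap is in Condition $1^{\circ}a$, and it is a real one: you claim that, because ${\bf t}_1,\ldots,{\bf t}_k \in \mathcal{I}(t,\hat{s})$, the factor $\exp\bigl(i\sum_j u_j X_n({\bf t}_j)\bigr)$ is, up to an $O(m/n)$ correction, measurable with respect to the strong past $\mathcal{F}_{k_1-m,\,k_2-m}$ at the south-west corner of the increment block. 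This is false. The region $\mathcal{I}(t,\hat{s})$ is defined by an ``or'' condition, so it is an L-shaped union of two half-strips: a point ${\bf t}_j$ may have one coordinate far beyond the block while the other coordinate is small. The corresponding partial sum $X_n({\bf t}_j)=\tfrac1n S_{[nt_{j1}],[nt_{j2}]}$ then involves on the order of $n^2$ variables $\nu_{i,j}$ whose indices exceed $(k_1-m,k_2-m)$ in one coordinate; the discrepancy is not a thin $m$-wide boundary strip, so the conditioning identity you write down (pulling $E_{k_1-m,\,k_2-m}$ inside against the exponential, with only an $o(h)$ error) does not hold. This is precisely the point where the two-parameter geometry differs from the one-dimensional case, and it is why the paper introduces the ``weak past'' $\sigma$-algebra $\mathcal{F}^{wp}_{k_1,k_2}=\sigma(\mathcal{F}_{i,j}:\, i\le k_1 \ \mbox{or}\ j\le k_2)$: instead of asserting measurability, it replaces the exponent by its projection $U_n=E[\sum_l i u_l X_n({\bf t}_l)\,|\,\mathcal{F}^{wp}]$, bounds $\|U_n-\sum_l iu_l X_n({\bf t}_l)\|_2 = O(1/n)$ via $m$-dependence and $L^2$-boundedness, and then uses $\|E[X_n(\Delta)\,|\,\mathcal{F}^{wp}]\|_2=O(1/n)$ together with the conditional-variance hypothesis for the second moment condition.

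Your instinct about the secondary issue is sound: the theorem's hypothesis is stated for square blocks (indeed for differences $S_{k_1+n,k_2+n}-S_{k_1,k_2}$ rather than rectangle increments), while $1^{\circ}a$ concerns thin rectangles of sides proportional to $\hat{t}-\hat{s}$ and $h$, and some tiling/bookkeeping argument is needed to bridge this; the paper itself passes over this step quickly. But that refinement cannot rescue the argument as written, because the first conditioning step --- the replacement of the exponential factor by a strong-past-measurable quantity --- is the step that fails. To repair your proof you would need to condition on the weak past (or otherwise exploit the L-shaped geometry of $\mathcal{I}(t,\hat{s})$), at which point you are essentially reproducing the paper's argument.
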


\begin{proof}
According to Lemma~\ref{lemma: wk conv to Brownian sheet}, it suffices to verify Conditions $1^{\circ}a$, $2^{\circ}a$, and $3^{\circ}a$.

Uniform integrability of the family, 
$$
\{ \max_{i \leq n, j \leq n}  \frac{ | S_{k_1 + i, k_2 + j} - S_{k_1, k_2} |^2 }{n^2}, \, n \geq 1, \, k_1, k_2 \geq 1 \}
$$ 
follows from Lemma~\ref{lemma: 1}.
This in turn implies that 
$$
\{ \frac{  X_n^2 (\Delta( ( \hat{s}, \hat{t} ] \times (t, t+h] )) }{ h (\hat{t} - \hat{s})}, h \geq \frac{1}{n}, t \in (0,1-h),\, ( \hat{s}, \hat{t} ] \subset [0,1], \, n = 1, 2, \cdots \}
$$ 
is uniformly integrable.
Condition $2^{\circ}a$ now follows by taking $t = \hat{s} = 0$. 
Condition $3^{\circ}a$ is also immediate (since $0 < \hat{t} - \hat{s} < 1$.)

To verify Condition $1^{\circ}a$, let $t \in (0,1)$, $( \hat{s}, \hat{t} ] \subset [0,1]$, 
${\bf t}_1, \cdots, {\bf t}_k \subset \mathcal{I}( t, \hat{s} )$, and $u_1, \cdots u_k \in \mathbb{R}$.
Define 
$$
U_n = E[\sum_{l = 1}^k i u_l X_n({\bf t}_l)| \mathcal{F}^{wp}_{[n (t,s)]}  ]
$$
where the ``weak past" $\sigma$-algebra $\mathcal{F}^{wp}_{k_1, k_2}$ is defined to be
$$
\mathcal{F}^{wp}_{k_1, k_2} = \sigma(  \mathcal{F}_{i,j}, \, i \leq k_1, \; \mbox{or} \; j \leq k_2).
$$
Then
\begin{align*}
\|  U_n -   \sum_{l = 1}^k i u_l X_n({\bf t}_l) \|_2 & \leq \max \{|u_1|, \cdots |u_k| \} \frac{1}{n \sigma} \sum_{l = 1}^k \sum_{ (1,1) \leq (i,j) \leq [n {\bf t}_l]  }  \| E[ \nu_{i,j}| \mathcal{F}^{wp}_{[n (t,s)]} ] - \nu_{i,j} \|_2 \\
                                                     & \leq K \cdot \frac{1}{n} ,  
\end{align*}
for some constant $K$ that depends on $m$ and $\sup \| \nu_{i,j} \|^2$, since the sum 
$$
\sum_{l = 1}^k \sum_{ (1,1) \leq (i,j) \leq [n {\bf t}_l]  }  \| E[ \nu_{i,j}| \mathcal{F}^{wp}_{[n (t,s)]} ] - \nu_{i,j} \|_2
$$ 
is finite by $m$-dependence and $L^2$-boundedness.

Together with uniform integrability of $ X_n(\Delta( ( \hat{s}, \hat{t} ] \times (t, t+h] ))$, this in turn implies
$$
E[  ( e^{\sum_{j = 1}^k i u_j X_n({\bf t}_j)} - e^{i U_n}) X(\Delta_{t, t+h}) ] \rightarrow 0.
$$
But
$$
| E[ e^{i U_n} X_n(\Delta_{t, t+h}) ] | \leq \| E[ X(\Delta_{t, t+h}) |  \mathcal{F}^{wp}_{[n (t,s)]}] \|_2 = O(  \frac{1}{n}  ) \rightarrow 0
$$
as $n \rightarrow \infty$.
Similarly, uniform integrability of $ X_n^2(\Delta_{t, t+h})$ implies
$$
E[ | ( e^{\sum_{j = 1}^k i u_j X_n({\bf t}_j)} -  e^{i U_n} ) ( X_n^2(\Delta_{t, t+h}) - h(\hat{t}- \hat{s}) ) | ]  \rightarrow 0.
$$
Since
$$
| E[ e^{i U_n}  ( X^2(\Delta_{t, t+h}) - h(\hat{t}- \hat{s}) )  ] | \leq  | ( X_n^2(\Delta_{t, t+h}) - h(\hat{t}- \hat{s}) )  ] | \rightarrow 0,
$$
by $(2)$, both conditions in $1^{\circ}a$ are verified. This proves the theorem.

\end{proof}

\section{Conclusion}

We proved an elementary invariance principle for the Brownian sheet where strong or wide-sense stationarity is not required. It is of interest in applications where
stationarity may be too strong an assumption. An immediately application is unit root testing for spatial models, the detailed discussion of which will be given in a separate paper.

\bibliographystyle{chicago}
\bibliography{Reference}

\end{document}